\newtheorem{theorem}{Theorem}[section]
\newtheorem{corollary}{Corollary}[section]
\newtheorem{lemma}{Lemma}[section]
\newtheorem{remark}{Remark}[section]
\theoremstyle{definition}
\newcommand{\beql}[1]{\begin{equation}\label{#1}}
\newcommand{\eeq}{\end{equation}}
\newcommand{\comment}[1]{}
\newcommand{\Abs}[1]{{\left|{#1}\right|}}
\newcommand{\Floor}[1]{{\left\lfloor{#1}\right\rfloor}}
\newcommand{\Set}[1]{{\left\{{#1}\right\}}}
\newcommand{\RR}{{\mathbb R}}
\newcommand{\CC}{{\mathbb C}}
\newcommand{\ZZ}{{\mathbb Z}}
\newcommand{\TT}{{\mathbb T}}
\newcommand{\one}{{\mathbbm{1}}}
\newcommand{\ft}[1]{\widehat{#1}}
\newcommand{\red}[1]{{#1}}
\newcounter{rem}
\newcounter{step}
\newcounter{mysec}
\newcounter{mysubsec}[mysec]
\begin{document}

\ifdefined\SMART
\title[Exponential polynomials and polygonal regions]{Exponential polynomials and identification of polygonal regions from Fourier samples}
\else
\title[Exponential polynomials and identification of polygonal regions]{Exponential polynomials and identification of polygonal regions from Fourier samples}
\fi

\author{Mihail N. Kolountzakis}
\address{\href{http://math.uoc.gr/en/index.html}{Department of Mathematics and Applied Mathematics}, University of Crete,\\Voutes Campus, 70013 Heraklion, Greece,\\and\\ \href{https://ics.forth.gr/}{Institute of Computer Science}, Foundation of Research and Technology Hellas, N. Plastira 100, Vassilika Vouton, 700 13, Heraklion, Greece}
\email{kolount@gmail.com}

\author{Emmanuil Spyridakis}
\address{\href{http://math.uoc.gr/en/index.html}{Department of Mathematics and Applied Mathematics}, University of Crete,\\Voutes Campus, 70013 Heraklion, Greece.}
\email{manos.ch.spyridakis@gmail.com}

\makeatletter
\@namedef{subjclassname@2020}{\textup{2020} Mathematics Subject Classification}
\makeatother
\subjclass[2020]{41A05, 41A27, 42A15, 42A16}

\keywords{Interpolation, sparse exponential sums, non-harmonic exponential sums, exponential polynomials, polygon reconstruction, Fourier coefficients, inverse problem}

\date{\today}

\begin{abstract}
Consider the set $E(D, N)$ of all bivariate exponential polynomials
$$
f(\xi, \eta) = \sum_{j=1}^n p_j(\xi, \eta) e^{2\pi i (x_j\xi+y_j\eta)},
$$
where the polynomials $p_j \in \CC[\xi, \eta]$ have degree $<D$, $n\le N$ and where $x_j, y_j \in \TT = \RR/\ZZ$. We find a set $A \subseteq \ZZ^2$ that depends on $N$ and $D$ only and is of size $O(D^2 N \log N)$ such that the values of $f$ on $A$ determine $f$. Notice that the size of $A$ is only larger by a logarithmic quantity than the number of parameters needed to write down $f$.
\red{We emphasize that, unlike most existing work on this problem, this is a \textbf{non-adaptive} sampling method: the sampling points are fixed a priori and depend on the maximum degree $D$ and maximum number of terms $N$ only.}

We use this in order to prove some uniqueness results about polygonal regions given a small set of samples of the Fourier Transform of their indicator function. If the number of different slopes of the edges of the polygonal region is $\le k$ then the region is determined from a predetermined set of Fourier samples that depends only on $k$ and the maximum number of vertices $N$ and is of size $O(k^2 N \log N)$. In the particular case where all edges are known to be parallel to the axes the polygonal region is determined from a set of $O(N \log N)$ Fourier samples that depends on $N$ only.

Our methods are non-constructive.
\end{abstract}

\maketitle
\tableofcontents


\section{Introduction}\label{sec:intro}

We deal with the general problem of identifying an object (a region in Eudlidean space, a measure or a function of a certain type) from samples of its Fourier Trasform or samples of the function itself. If the object comes from a parametric family where each object is defined by $N$ real or complex parameters then it is a reasonable expectation that the number of samples used to identify the object should not be much larger than $N$.

Suppose for instance, to mention an almost obvious but important case, that our parametric family consists of all one-variable algebraic polynomials of degree $< N$ and complex coefficients
$$
f(x) = f_{n-1} x^{n-1} + \cdots + f_1 x + f_0,\ \ \ \text{ with }f_j \in \CC, n \le N.
$$
Then, if $f$ is such a polynomial, the set of samples of $f$ on the set $\Set{0, 1, \ldots, N-1}$, which consists of $N$ points, is enough to determine $f$: any two such polynomials agreeing on that set must be the same polynomial as the difference of these polynomials can have at most $N-1$ roots unless it is identically 0.

Another famous case is the determination of \textit{exponential sums} (trigonometric polynomials) with at most $N$ frequencies
$$
f(\xi) = \sum_{j=1}^n f_j e^{2\pi i \lambda_j \xi},\ \ \ (f_j \in \CC,\ n \le N)
$$
from samples. Let us restrict the frequencies $\lambda_j$ to lie in the torus $\TT = \RR/\ZZ$, which we can identify with $[0, 1)$ \red{(it is after all natural to assume that all frequencies are in a bounded region)}, and seek to determine $f(\xi)$ from its samples on a set $A \subseteq \ZZ$. The famous Prony method from the 18th century \cite{prony1795essai,diederichs2023many,vetterli2002sampling} says that we can identify $f$ from its values on the set $A = \Set{0, 1, \ldots, 2N}$. See also Lemma \ref{recurrence} below, with $D=1$, for a slightly different viewpoint.

The case of \textit{exponential polynomials} with $n \le N$ terms and polynomial coefficients of degree $\deg p_j < D$ \red{and frequencies $\lambda_j \in \TT$} is also dealt with in \cite{vetterli2002sampling}:
$$
f(\xi) = \sum_{j=1}^n p_j(\xi) e^{2\pi i \lambda_j \xi}
$$
can be identified from its samples on the set $A = \Set{0, 1, \ldots, 2ND}$ as shown also in our Lemma \ref{recurrence}.

\begin{figure}[h]
 \input 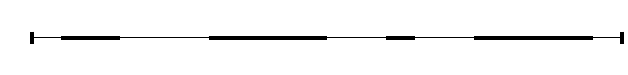_t
 \caption{A set $E \subseteq \TT$ consisting of $n$ arcs.}\label{fig:arcs}
\end{figure}

An example of a more geometric flavor \cite{courtney2010unions,diederichs2023many} is the case of a set $E \subseteq \TT$ which is a union of at most $N$ arcs (intervals) as shown in Fig.\ \ref{fig:arcs}. Such a set can be identified by sampling its Fourier Transform $\ft{\one_E}$ on the set $A = \Set{0, 1, \ldots, N}$.

The situation becomes more complicated in higher dimension. Multivariate exponential sums
$$
f(t) = \sum_{j=1}^n f_j e^{2\pi i \lambda_j \cdot t},\ \ (n \le N,f_j \in \CC,  \lambda_j \in \TT^d, t \in \ZZ^d)
$$
were shown recently \cite{diederichs2023many} (see also \cite{sauer2018prony}) to be identifiable by $O(N \log N)$ samples, only slightly more than the number $O(N)$ of degrees of freedom.

In this paper we study the identification, \red{from a predetermined set of samples,} of bivariate exponential polynomials
\red{
$$
f(\xi,\eta)= \sum_{j=1}^n p_j (\xi,\eta) e^{-2\pi i (x_j \cdot \xi + y_j \cdot \eta )},
$$
with $n \le N$, $p_j $ being a polynomial of degree $< D$ and $(x_j, y_j) \in \TT^2$, when we know only the maximum degree $D$ and the maximum number of terms $N$. 
}
We apply our results to the identification of certain polygonal regions \red{from samples of the Fourier Transform of their indicator functions}. Our work is inspired from the paper \cite{wischerhoff2016reconstruction}. Our method assumes, in contrast to \cite{wischerhoff2016reconstruction}, that we know the possible slopes of the edges of the polygons.

Our results are as follows.
In \S \ref{s:exppoly} (see Theorem \ref{main}) we show that any bivariate exponential polynomial with at most $N$ terms and polynomial coefficients of degree $<D$ can be identified from its samples on a \red{predetermined} set of size $O(D^2 N \log N)$. This sampling set depends on $N$ and $D$ only. In \S \ref{s:polygons} we use this result in order to show, for instance, that polygonal regions with edges parallel to the two axes and at most $N$ vertices can be identified by sampling the Fourier Transform of their indicator function at a predetermined set of size $O(N \log N)$, where, again, the sampling set depends only on $N$ (see Corollary \ref{cor:polygons-unkown-slopes}).

\begin{remark}
Note than in \cite{wischerhoff2016reconstruction} it is precisely the polygons whose vertices project non-uniquely onto a line that create the most problems, which happens a lot with polygons whose edges are parallel to the two axes. This coincidence of projections is reflected in the $\log N$ factor in the size of our sampling set: a small and uniform price to pay for all polygons.
\end{remark}

We emphasize that the sampling problems we are studying are of the \textit{non-adaptive} type. In other words, given the class of functions that we want to identify, the sampling sets are specified a priori and are not allowed to change depending on what values we have already seen from $f$ (this would be \textit{adaptive} sampling, as is the approach in \cite{wischerhoff2016reconstruction}).

\noindent
{\bf Note on algorihmic inversion.}
We should also clarify that we do not provide \textit{algorithms} for recovering the object (function, polygon) from its samples or Fourier samples. We only deal with the concept of inversion in principle. Whenever we claim that a function $f$ from a certain class $\mathcal{C}$ can be identified from its values on a sampling set $A$ all we mean is that the mapping
$$
f \to (f(a))_{a \in A}
$$
is injective on $\mathcal{C}$ (different functions from $\mathcal{C}$ give different data). We do not deal at all with the algorithmic reconstruction of $f$. \red{There have been many papers (e.g. \cite{wischerhoff2016reconstruction,cuyt2018multivariate}), especially for the case of exponential sums, where a more constructive approach has been taken but our goal is to have the minimal \textit{a priori} sample set for a given class of functions. Allowing one (which we do not do) to alter the sampling points as the method progresses can lead to reconstruction with fewer samples.}

\noindent{\bf Notation:} We write $[n] = \Set{1, 2, \ldots, n}$ and $[n]_0 = \Set{0, 1, \ldots, n}$.

\section{Indentifying exponential polynomials}\label{s:exppoly}

A multivariate polynomial is of degree $d$ if $d$ is the highest power that any of its variables is raised to. Thus, a two-variable polynomial $p(\xi, \eta)$ is of degree $\le d$ if and only if it can be written in the form
$$
p(\xi, \eta) = \sum_{k=0}^d p_k(\xi) \eta^k,
$$
where the univariate polynomials $p_k(\xi)$ are of degree $\le d$.
All the polynomials have complex coefficients.

\begin{remark}[Determination of an exponential polynomial by its values on the integers]\label{fourier-inversion}
An exponential polynomial
$$
f(\xi, \eta) = \sum_{j=1}^n p_j(\xi, \eta) e^{2\pi i (x_j \xi + y_j \eta)}\red{,\ \ \ (x_j, y_j) \in \TT^2,}
$$
whose values are known for all $\xi, \eta \in \ZZ$ is completely determined. The reason is that it can be viewed as the Fourier Transform (defined on $\ZZ^2$) of the distribution (automatically tempered) on $\TT^2$, the dual group of $\ZZ^2$ \cite{rudin1962groups},
\begin{equation}\label{distribution}
S = \sum_{j=1}^n p_j\left(\frac{1}{2\pi i} \partial_1, \frac{1}{2\pi i} \partial_2\right) \delta_{(x_j, y_j)}.
\end{equation}
Here $\delta_{(x_j, y_j)}$ denotes a unit point mass at $(x_j, y_j) \in \TT^2$ and $\partial_j$ denotes differentiation with respect to the $j$-th variable, $j=1,2$.
By Fourier inversion (the Fourier Transform is a linear isomorphism from the space of tempered distributions onto itself) knowing $f$ on $\ZZ^2$ (the dual group of $\TT^2$) we automatically know the tempered distribution $S$ on $\TT^2$. And it is easy to see that $S$ determines uniquely both the points $(x_j, y_j)$ and the corresponding polynomials $p_j$.

The analogous statement is true for exponential polynomials with any number of variables.
\end{remark}

In this section we identify an exponential polynomial, obeying some assumptions, by its values on a sampling set in $\ZZ$ or $\ZZ^2$. We shall not always attempt to give the smallest possible sampling set. For the sake of simplicity in expressions we may opt to prescribe a slightly larger sampling set. In the end what matters to us is the size of the sampling set as $N$ (the maximum number of terms in the exponential polynomial) tends to infinity.

Let us start with univariate exponential polynomials.
\begin{lemma}\label{recurrence}
Let $f(\xi) = \sum_{j=1}^n p_j(\xi) e^{2\pi i x_j \xi}$\red{, $x_j \in \TT$,} be a univariate exponential polynomial with $n \le N$ terms. Assume also that the degree of each polynomial coefficient $p_j$ is $< D$.

Then the function $f$ is determined by its values on the set $A = [2ND]_0$.
\end{lemma}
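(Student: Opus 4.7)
The plan is to show that the restriction map $f \mapsto f|_A$ is injective on the stated class. Take two exponential polynomials $f, g$ from the class that agree on $A$, and set $h = f - g$. After collecting terms with equal frequencies, we may write
$$
h(\xi) = \sum_{k=1}^K r_k(\xi) e^{2\pi i z_k \xi},
$$
with distinct $z_k \in \TT$, $K \le 2N$, and $\deg r_k < D$. The goal becomes: if $h$ vanishes on $[2ND]_0$, then $h \equiv 0$.

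The central tool will be a constant-coefficient linear recurrence annihilating $h$. Let $T$ denote the shift operator $(Tu)(\xi) = u(\xi+1)$ on functions defined on $\ZZ$. A direct computation shows that $T - e^{2\pi i z_k}$ applied to $\xi^m e^{2\pi i z_k \xi}$ produces $e^{2\pi i z_k \xi}$ times a polynomial in $\xi$ of degree strictly smaller than $m$; iterating, $(T - e^{2\pi i z_k})^D$ annihilates each term $r_k(\xi) e^{2\pi i z_k \xi}$ whenever $\deg r_k < D$. Consequently the polynomial
$$
P(T) = \prod_{k=1}^K (T - e^{2\pi i z_k})^D
$$
of degree $KD \le 2ND$ satisfies $P(T)h \equiv 0$ on $\ZZ$.

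Now I would exploit this recurrence to propagate the vanishing of $h$ from $A$ to all of $\ZZ$. The set $A = [2ND]_0$ contains $2ND + 1 \ge KD + 1$ consecutive integers on which $h = 0$, which is enough to start the recurrence forward (using that the leading coefficient of $P$ is $1$) and conclude $h(\xi) = 0$ for all $\xi \ge 0$. Since the constant term $\prod_{k=1}^K (-e^{2\pi i z_k})^D$ is nonzero, the same recurrence is invertible and may be run backwards, giving $h(\xi) = 0$ for all $\xi \in \ZZ$ as well. Finally, Remark \ref{fourier-inversion} (applied in one variable) guarantees that an exponential polynomial vanishing on $\ZZ$ must be identically zero, so $h \equiv 0$ and $f = g$.

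I do not expect a serious obstacle here; the argument is a clean Prony-style annihilator computation. The most delicate point is the bookkeeping on degrees: one must observe that taking the difference $f - g$ can double the number of distinct frequencies to at most $2N$, which is precisely why the bound $2ND$ (rather than $ND$) appears in the size of the sampling set. The step of passing from vanishing on $\ZZ$ to vanishing identically is the only place where something beyond pure algebra is used, and it is handled by invoking Remark \ref{fourier-inversion}.
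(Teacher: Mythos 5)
Your proof is correct and follows essentially the same route as the paper: form the difference, note it has at most $2N$ frequencies and coefficients of degree $<D$, observe it satisfies a linear recurrence of order at most $2ND$ whose characteristic polynomial has nonzero constant term, propagate the vanishing from $[2ND]_0$ to all of $\ZZ$, and conclude via the uniqueness of exponential polynomials determined on $\ZZ$. The only difference is cosmetic: you construct the annihilating operator $\prod_k (T-e^{2\pi i z_k})^D$ explicitly, whereas the paper simply cites the standard result on generalized power sums.
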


\begin{proof}
It is well known \cite[Ch.\ 1, ``Generalized power sums'']{everest2015recurrence} that the sequence $f(n)$, $n \in \ZZ$, satisfies a homogeneous linear recurrence relation of order
$$
\sum_{j=1}^n (1+\deg p_j) \le N D.
$$
This implies that if $f=0$ on $[ND]_0$ then $f(n) = 0$ for all $n \in \ZZ$.

If two exponential polynomials $f_1$ and $f_2$ have at most $N$ frequencies each and polynomial coefficients of degree $< D$ then the exponential polynomial $f_1-f_2$ has at most $2N$ frequencies and polynomial coefficients of degree $< D$. If $f_1, f_2$ agree on $[2ND]_0$ it follows from the discussion in the previous paragraph that $f_1(n)-f_2(n) = 0$ for all $n \in \ZZ$, so that $f_1, f_2$ are the same exponential polynomial.

\end{proof}

Moving to the bivariate case let us first settle the case of simple algebraic polynomials.

\begin{lemma}\label{bipoly}
Let $p(\xi, \eta)$ be a polynomial of degree $< D$.

Then $p$ is determined by its values on the sampling set $A = [D]_0\times[D]_0$.
\end{lemma}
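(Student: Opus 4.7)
The plan is to reduce to the univariate case by slicing, using the standard fact that a one-variable polynomial of degree $<D$ which has $D+1$ roots must be identically zero.

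By linearity it suffices to show that if a polynomial $p(\xi,\eta)$ of degree $<D$ vanishes on $A=[D]_0\times[D]_0$ then $p\equiv 0$: two polynomials $p_1,p_2$ of degree $<D$ agreeing on $A$ have a difference of degree $<D$ that vanishes on $A$. Note that $[D]_0$ has $D+1$ elements, one more than the upper bound on the degree, which is what makes the argument work.

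Following the definition in the paper, I write
\[
p(\xi,\eta)=\sum_{k=0}^{D-1} p_k(\xi)\,\eta^k,
\]
where each $p_k$ is a one-variable polynomial of degree $<D$. First I fix $\xi_0\in[D]_0$ and consider the univariate polynomial $q_{\xi_0}(\eta):=p(\xi_0,\eta)$, which is of degree $<D$ in $\eta$. By assumption $q_{\xi_0}$ vanishes at the $D+1$ points $\eta\in[D]_0$, hence $q_{\xi_0}\equiv 0$ as a polynomial in $\eta$. Since the monomials $1,\eta,\ldots,\eta^{D-1}$ are linearly independent, this forces $p_k(\xi_0)=0$ for every $k=0,\ldots,D-1$.

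Now I let $\xi_0$ range over $[D]_0$: each $p_k$ is a univariate polynomial of degree $<D$ that vanishes at the $D+1$ points of $[D]_0$, so $p_k\equiv 0$ for all $k$, and therefore $p\equiv 0$. There is no real obstacle here; the only thing to be careful about is matching the paper's convention that ``degree $<D$'' means each variable appears to a power $<D$, which is exactly what allows the slicing argument to give $(D+1)^2$ sample points sufficing for a polynomial with at most $D^2$ coefficients.
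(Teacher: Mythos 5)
Your proof is correct and follows essentially the same route as the paper: slice by fixing $\xi$, conclude that each coefficient polynomial $p_k$ vanishes at that $\xi$, then vary $\xi$ to conclude $p_k\equiv 0$. The only cosmetic difference is that you invoke the root-counting bound for univariate polynomials where the paper writes out the equivalent invertible Vandermonde system.
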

\begin{proof}
Take two polynomials $p^1(\xi,\eta), p^2 (\xi,\eta)$ in $\RR^2$ of degree $< D$, that are identical on $A$. We will show that they are equal in $\RR^2$, and are therefore the same polynomial. Indeed, for every $(\xi,\eta) \in A$ we have :
\begin{equation*}
(p^1-p^2)(\xi,\eta) = \sum_{0\le k < D}( p_k^1 -p_k^2)( \xi ) \, \eta^k=0,
\end{equation*}
where we have written $p^j_k(\xi)$ for the coefficient of $\eta^k$ in $p^j$.
Fix $\xi \in [D-1]_0$ and let $\eta$ vary in $[D-1]_0$. For every such $\xi$, we get a $D\times D$ linear system as below:
\begin{equation*}
\begin{pmatrix}
1 & 0 & 0 & \cdots & 0 \\
1 & 1 & 1^2 & \cdots & 1^{D-1}\\
\cdots  & \cdots  & \cdots  & \cdots & \cdots  \\
1 & D-1 & (D-1)^2 & \cdots & (D-1)^{D-1} 
\end{pmatrix}
\begin{pmatrix}
(p_0  ^1 -p_0^2) (\xi) \\
(p_1  ^1 -p_1^2) (\xi) \\
\cdots \\
(p_{D-1}  ^1 -p_{D-1}^2) (\xi)
\end{pmatrix}
=
\begin{pmatrix}
0 \\ 0 \\ \cdots \\ 0
\end{pmatrix}
\end{equation*}
with the $D \times D$ matrix on the left being an invertible Vandermonde matrix and so we get that for $k\in [D-1]_0 $ and every $\xi\in [ D-1 ]_0$:
\begin{equation*}
(p_k ^1 -p_k ^2) (\xi) =0
\end{equation*}
 Since for each $k\in [D-1]_0 $ the polynomial $(p_k ^1 -p_k ^2) (\xi)$  has degree $< D$, we conclude that for every $k \in [D-1]_0$ :
\begin{equation*}
p_k^1 (\xi) = p_k ^2 (\xi )
\end{equation*}
for every $\xi \in \RR$ and hence our two polynomials $p_1 , p_2$ are equal on $\RR ^2$.
We have shown that the sampling set $[D-1]_0 \times [D-1]_0$ is enough for identification, hence so is its superset $[D]_0 \times [D]_0$.

\end{proof}

Next we introduce frequencies in one variable only.

\begin{lemma}\label{unifreq}
Let $f(\xi,\eta) = \sum_{j=1}^n p_j (\xi,\eta) e^{2\pi i \xi x_j}$\red{, $x_j\in\TT$,} with the polynomials $p_j$ having degree $< D$ and such that $n \le N$.

Then $f$ is determined by its values on the sampling set $A = [2 N D]_0 \times [D]_0$, a set of size $O(D^2 N)$.
\end{lemma}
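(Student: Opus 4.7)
The idea is to reduce to Lemma~\ref{recurrence} on horizontal slices and then fill in the remaining $\eta$-information polynomially, arriving at knowledge of $f$ on all of $\ZZ^2$, at which point Remark~\ref{fourier-inversion} finishes the job.

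First I would fix $\eta_0 \in [D]_0$ and consider $g_{\eta_0}(\xi) := f(\xi,\eta_0)$. By the degree convention for bivariate polynomials stated at the beginning of \S\ref{s:exppoly}, each $p_j(\xi,\eta)$ expands as $\sum_{k=0}^{D-1} q_{j,k}(\xi)\eta^k$ with $\deg q_{j,k}<D$, so $p_j(\cdot,\eta_0)$ is a univariate polynomial of degree $<D$. Hence $g_{\eta_0}(\xi) = \sum_{j=1}^n p_j(\xi,\eta_0) e^{2\pi i \xi x_j}$ fits the hypotheses of Lemma~\ref{recurrence} (with $\le N$ frequencies and coefficient degree $<D$), and so is determined on all of $\ZZ$ by its values on $[2ND]_0$. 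Letting $\eta_0$ range over $[D]_0$, the sample set $A = [2ND]_0 \times [D]_0$ therefore recovers $f$ on the entire infinite strip $\ZZ \times [D]_0$.

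Second, I would fix $\xi_0 \in \ZZ$. The restriction $\eta \mapsto f(\xi_0,\eta) = \sum_{j=1}^n p_j(\xi_0,\eta) e^{2\pi i \xi_0 x_j}$ is a univariate polynomial in $\eta$ of degree $<D$, since each $p_j(\xi_0,\cdot)$ is such a polynomial and the exponential factors carry no $\eta$-dependence. A univariate polynomial of degree $<D$ is pinned down by its values at the $D+1$ points of $[D]_0$, which are exactly what the previous step provides. Hence $f(\xi_0,\eta)$ is determined for every $\eta$, and in particular on the full lattice $\ZZ^2$. Remark~\ref{fourier-inversion} then identifies $f$ uniquely as an exponential polynomial, and the cardinality $|A|=(2ND+1)(D+1)=O(D^2 N)$ matches the claim.

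I do not expect a real obstacle in this argument; the only bookkeeping that needs genuine attention is verifying that each slice lands in a class where the cited results apply, which is immediate from the definition of degree for bivariate polynomials. The slightly nontrivial conceptual step is seeing that one does not need to track the unknown frequencies $x_j$ at all: the one-dimensional recovery along horizontal slices is ``black-boxed'' through Lemma~\ref{recurrence}, and the vertical extension is elementary because no exponentials in $\eta$ are present.
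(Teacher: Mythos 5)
Your proof is correct, and its second half takes a genuinely different route from the paper's. The first half is the same: slice at each $\eta_0 \in [D]_0$ and apply Lemma \ref{recurrence} to recover $f$ on the strip $\ZZ \times [D]_0$ from the samples on $[2ND]_0 \times [D]_0$. From there the paper proceeds by tracking which frequencies $x_j$ are ``revealed'' on each slice (a frequency stays hidden at $\eta_0$ when $p_j(\cdot,\eta_0)\equiv 0$), invokes Lemma \ref{bipoly} to argue that every $x_j$ is revealed for some $\eta_0 \in [D]_0$, and then uses Lemma \ref{bipoly} a second time to reconstruct each bivariate coefficient $p_j$ from its values on $[D]_0\times[D]_0$. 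You instead note that for fixed $\xi_0$ the map $\eta \mapsto f(\xi_0,\eta)$ is an honest polynomial of degree $<D$ (the exponentials carry no $\eta$-dependence), interpolate from the $D+1$ known values to get $f$ on all of $\ZZ^2$, and finish with the uniqueness statement of Remark \ref{fourier-inversion}. Your route dispenses with Lemma \ref{bipoly} and with the bookkeeping about vanishing slices, and it does not even require the $p_j$ to be nonzero; what it buys this with is reliance on the distributional inversion argument of Remark \ref{fourier-inversion} in place of an explicit recovery of the parameters $(x_j, p_j)$ --- entirely legitimate in the paper's non-constructive framework, and arguably cleaner.
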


\begin{proof}
Fix $\eta = \eta_0 \in [D]_0$. Then $f(\xi, \eta_0)$ is a univariate exponential polynomial with coefficients of degree $<D$, so, by Lemma \ref{recurrence}, sampling on $[2ND]_0 \times \Set{\eta_0}$ determines all polynomials $p_j(\cdot, \eta_0)$ and all $x_j$ for which $p_j(\cdot, \eta_0)$ is not the zero polynomial. But it may happen, for a fixed $\eta_0$, that some of the $x_j$ will not be revealed by invoking Lemma \ref{recurrence} since $p_j(\cdot, \eta_0)$ may be identically zero in the first variable for that particular value $\eta_0$ of $\eta$.

Since each $p_j(\cdot, \cdot)$ is assumed not to be identically 0 as a bivariate polynomial it follows from Lemma \ref{bipoly} that some of the values of $p_j(\cdot, \cdot)$ on $[D]_0 \times [D]_0$ are non-zero. Hence, by the process described in the previous paragraph repeated for all $\eta_0 \in [D]_0$ all the $x_j$ will be revealed. This implies that for each $j$ we know all the values of $p_j(\cdot, \cdot)$ on $[D]_0 \times [D]_0$, so, by Lemma \ref{bipoly} again, all the $p_j$ are determined.

\end{proof}

\begin{figure}[h]
\input 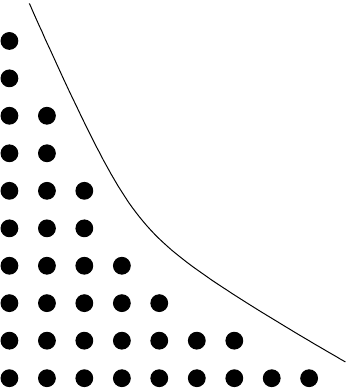_t
\caption{The sampling set for Lemma \ref{NlogN}}
\label{fig:samples}
\end{figure}

The next Lemma is the crucial technical result concerning bivariate exponential polynomials.

\begin{lemma}\label{NlogN}
Let $f(\xi,\eta)= \sum_{j=1}^n p_j (\xi,\eta) e^{-2\pi i (x_j \cdot \xi + y_j \cdot \eta )}$\red{, $(x_j, y_j) \in \TT^2$,}  with the polynomials $p_j$ having degree $< D$ and such that $n \le N$.

We can determine $f$ by the following data (see Fig.\ \ref{fig:samples}):
\begin{itemize}
\item[(a)] Its values at the sampling set
\begin{equation}\label{An}
A_N = \bigcup_{1\le r\le N} \left[ \: 2\Floor{ \frac{N}{r}} D \: \right]_0 \times [2rD]_0
\end{equation}
\item[(b)] How many many points of the frequency set $V=\{(x_j,y_j)\}_{j\le n} $ of $f$,  project to each $x\in \red{\TT}$. \red{In other words, we know the set $X=\Set{x_j}$ of distinct $x_j$ and for each $x \in X$ we know the size of the set $\Set{(x, y_j) \in V}$.}
\end{itemize}
The sampling set in \eqref{An} is of size $O(D^2 N \log N)$.
\end{lemma}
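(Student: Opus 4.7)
The plan is to peel off the terms of $f$ in increasing order of the multiplicities $m_i$ with which each distinct $x$-coordinate appears. Part (b) supplies the distinct $x$-coordinates $u_1,\dots,u_k\in\TT$ together with their multiplicities $m_i = |\{j: x_j=u_i\}|$; reindex so that $m_1\ge m_2\ge\cdots\ge m_k$, and record the key counting bound $J(r) := |\{i : m_i\ge r\}|\le \lfloor N/r\rfloor$ (since $r\cdot J(r)\le\sum_{i:m_i\ge r} m_i\le n\le N$). Grouping the terms of $f$ by common $x$-frequency gives the decomposition
$$
f(\xi,\eta) = \sum_{i=1}^k Q_i(\xi,\eta)\,e^{-2\pi i u_i \xi},\qquad Q_i(\xi,\eta) = \sum_{l=1}^{m_i} p_{i,l}(\xi,\eta)\,e^{-2\pi i y_{i,l}\eta},
$$
in which $Q_i(\xi_0,\cdot)$ is a univariate exponential polynomial in $\eta$ with exactly $m_i$ frequencies and polynomial coefficients of degree $<D$, while $Q_i(\cdot,\eta_0)$ is a polynomial of degree $<D$ in $\xi$.

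I will prove by induction on $r=1,2,\dots,m_1$ that after using the samples in $B_1\cup\cdots\cup B_r$, with $B_r := [2\lfloor N/r\rfloor D]_0\times[2rD]_0$, the function $Q_i$ is known on all of $\ZZ^2$ for every $i$ with $m_i\le r$. In round $r$ define the residual
$$
g_r(\xi,\eta) := f(\xi,\eta) - \sum_{i:\,m_i<r} Q_i(\xi,\eta)\,e^{-2\pi i u_i\xi} = \sum_{i:\,m_i\ge r} Q_i(\xi,\eta)\,e^{-2\pi i u_i\xi},
$$
which by the inductive hypothesis and the sampled values of $f$ on $B_r\subseteq A_N$ can be evaluated at every point of $B_r$. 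Fix $\eta_0\in[2rD]_0$: as a function of $\xi$, $g_r(\cdot,\eta_0)$ is a univariate exponential polynomial with at most $J(r)\le\lfloor N/r\rfloor$ frequencies and polynomial coefficients of degree $<D$. By Lemma \ref{recurrence}, its $2\lfloor N/r\rfloor D+1$ values at $\xi\in[2\lfloor N/r\rfloor D]_0$ determine it as an exponential polynomial, and since the $u_i$'s are distinct and known this uniquely extracts each polynomial coefficient $Q_i(\cdot,\eta_0)$ for every $i$ with $m_i\ge r$.

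Now for each $i$ with $m_i=r$ and each $\xi_0\in[2\lfloor N/r\rfloor D]_0$, the function $\eta\mapsto Q_i(\xi_0,\eta)$ is a univariate exponential polynomial in $\eta$ with exactly $r$ frequencies and degree-$<D$ coefficients, so Lemma \ref{recurrence} extends its known values on $[2rD]_0$ to all of $\ZZ$. For fixed $\eta$ the map $\xi\mapsto Q_i(\xi,\eta)$ is a polynomial of degree $<D$, so its values on $[D]_0\subseteq[2\lfloor N/r\rfloor D]_0$ determine it on $\ZZ$. Thus $Q_i$ is now known on all of $\ZZ^2$, closing the induction. Running $r$ from $1$ up to $m_1$ then determines every $Q_i$ on $\ZZ^2$, hence $f$ on $\ZZ^2$, and Remark \ref{fourier-inversion} identifies $f$ as an exponential polynomial.

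For the size bound, passing from $B_1\cup\cdots\cup B_{r-1}$ to $B_1\cup\cdots\cup B_r$ adds at most $2D$ new $\eta$-rows, each of width $\le 2\lfloor N/r\rfloor D+1$, yielding $|A_N|\le\sum_{r=1}^N 2D(2\lfloor N/r\rfloor D+1)=O(D^2N\log N)$. The pivotal place where I expect most care to be needed is the $\xi$-direction decoupling in each round $r$: it rests squarely on the counting bound $J(r)\le\lfloor N/r\rfloor$ matching the $\xi$-width $2\lfloor N/r\rfloor D$ of $B_r$, which is precisely what the sampling set $A_N$ is engineered to deliver. If this matching ever failed the induction would collapse, so that balance is the heart of the argument.
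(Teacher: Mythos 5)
Your proof is correct and follows essentially the same route as the paper's: you partition the distinct $x$-frequencies by multiplicity, record the key bound $J(r)\le\lfloor N/r\rfloor$ (the paper's \eqref{xt-bound}), and peel off the terms in increasing order of multiplicity, using Lemma \ref{recurrence} in the $\xi$-direction with $\lfloor N/r\rfloor$ terms and in the $\eta$-direction with $r$ terms. The only cosmetic difference is that you inline the content of Lemma \ref{unifreq} (exponential recurrence in $\eta$ plus polynomial interpolation in $\xi$) and organize the peeling as a formal induction.
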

\begin{proof}

Write $X=\{ x_j \}$ for the set of distinct $x$ that appear as first coordinates for the points of $V$. We partition $X$ according to how many points of $V$ project to each point (see Fig.\ \ref{fig:partition}):
\begin{equation*}
X=X_1 \cup \cdots \cup X_r, \ \ \ \ (\text{for some }r \le N)
\end{equation*}
 where
$$
X_t = \Set{ x\in X \ :\ \left| \{y \ :\ (x,y) \in V\} \right| = t }.
$$
In the proof that follows assumption (b) is only used in order to known to which $X_t$ a given $x\in X$ belongs.

\begin{figure}[h]
 \input 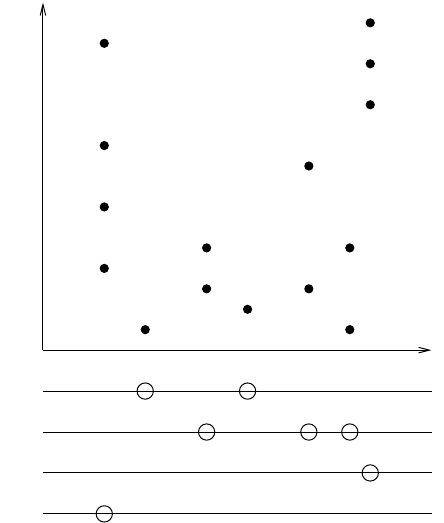_t
 \caption{The partition of the set $X$ (projections of the points to the $x$-axis), to the sets $X_1, X_2, \cdots$.}\label{fig:partition}
\end{figure}

A crucial observation (proof by contradiction) here is that for $1\le t \le r$ we have:
\begin{equation}\label{xt-bound}
\Abs{X_t \cup X_{t+1} \cup \cdots \cup X_r} \le \frac{N}{t}
\end{equation}
We write $f$ as :
$$
f(\xi,\eta)=\sum_{x\in X} \left( \sum_{y\: : \: (x,y) \in V} p_{(x,y)}(\xi,\eta) e^{2\pi i \eta y} \right )\: e^{2\pi i \xi x}.
$$
For any fixed $\eta$ this is an exponential polynomial in $\xi$ with $\Abs{X} \le N$ terms and polynomial coefficients of degree $<D$, so, using Lemma \ref{recurrence}, sampling at $[2\Abs{X}D]_0 \times \Set{\eta}$ determines the polynomials of $\xi$
\begin{equation}\label{qxxi}
q_{x, \eta}(\xi) = \sum_{y \ :\ (x,y) \in V} p_{(x,y)}(\xi, \eta) e^{2\pi i \eta y},
\end{equation}
for every $\xi \in \RR$.

Write now
$$
f_t(\xi, \eta) = \sum_{x \in X_t} \sum_{y:\ (x, y) \in V} p_{(x, y)}(\xi, \eta) e^{2\pi i (x\xi+y\eta)}
$$
for the part of $f$ extending over $x \in X_t$ only, so that $f = f_1+f_2+\cdots+f_r$. We shall first determine $f_1$, then $f_2$, etc.

Notice that for any fixed $\xi$ the quantity $q_{x, \eta}(\xi)$ is an exponential polynomial in $\eta$. If $x \in X_t$ then, from \eqref{xt-bound}, this exponential polynomial has $\Abs{X_t} \le \Floor{N/t}$ terms and all its polynomial coefficients have degree $<D$.

For $x \in X_1$, from Lemma \ref{unifreq} with the roles of $\xi$ and $\eta$ reversed, 
$q_{x, \eta}(\xi)$ is determined by sampling it on $[D]_0 \times [2D]_0$. By the discussion before \eqref{qxxi} these values of $q_{x, \eta}(\xi)$ can be determined from the samples of $f$ at $[2ND]_0 \times [2D]_0 \subseteq A_N$. Thus sampling $f$ at $A_N$ suffices to determine the bivariate exponential polynomial $f_1(\xi, \eta)$.

To determine $f_2$ we apply roughly the same procedure to the polynomial $f-f_1$. Since we now know $f_1$ we can assume that we have sampled $f-f_1$ on $A_N$. But $f-f_1$ now has $\Abs{X_2+X_3+\cdots+X_r} \le \Floor{N/2}$ terms, so  to determine the polynomials of $\xi$
$$
q_{x, \eta}(\xi),\ \ (x \in X_2 \cup X_3 \cup \cdots \cup X_r)
$$
we only need to sample $f$ at $\left[2\Floor{N/2} D\right]_0 \times \Set{\eta}$. Viewing, again, $q_{x, \eta}(\xi)$ as an exponential polynomial in $\eta$ for every fixed $\xi$, Lemma \ref{unifreq} tells us that, for $x \in X_2$, it is enough to sample $q_{x, \eta}(\xi)$ at $[D]_0 \times [4D]_0$ (since $q_{x, \eta}(\xi)$ has 2 terms. By the discussion before \eqref{qxxi} these values of $q_{x, \eta}(\xi)$ can be determined from the samples of $f$ at $[2\Floor{N/2}D]_0 \times [4 D]_0 \subseteq A_N$.

Thus we have also determined $f_2$. We next work on $f-f_1-f_2$ to determine $f_3$ from the samples of $f$ at $[2\Floor{N/3}D]_0 \times [6D]_0 \subseteq A_N$ and so on.

The fact that $\Abs{A_N} = O(D^2 N \log N)$ is easily seen as all pairs $(m, n) \in A_N$ satisfy $m \cdot n \le 4 D^2 N$.

\end{proof}

In the next Lemma we point out that data (b) from Lemma \ref{NlogN} represent only a finite number of options. \red{These correspond to the finite number of ways a given number of points in $\TT^2$, whose set of projections onto the $x$-axis is known, can be distributed over these projections if all we care about is how many points project to each point of the $x$-axis.}

\begin{lemma}\label{finite-number}
There are at most finitely many exponential polynomials $f$ of the form
$$
f(\xi,\eta) = \sum_{j=1}^K p_j (\xi,\eta) e^{-2\pi i (x_j \cdot \xi + y_j \cdot \eta )},\ \ \red{(x_j, y_j) \in \TT^2,}
$$
where $K\le N$,  $p_j$ polynomials of degree $< D$, with given values on the set $A_N$ in \eqref{An} and with given the projections of its frequencies onto the $x$-axis
\begin{equation*}
X = \{x_j\}_{1 \le j \le K}.
\end{equation*}
(We do not assume to know how many frequencies project to each $x\in X$)
\end{lemma}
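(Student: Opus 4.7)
The plan is to reduce this lemma to Lemma \ref{NlogN} by observing that the only piece of information missing from the hypothesis of Lemma \ref{NlogN} is the multiplicity function telling us, for each $x \in X$, how many frequencies of $f$ project onto $x$. Once I argue that there are only finitely many possible multiplicity functions consistent with a candidate $f$ of the given form, the result will follow because each such choice yields at most one $f$.

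\textbf{Step 1 (enumerate the missing data).} Write the unknown multiplicity function $\mu : X \to \{1,2,\ldots,N\}$ defined by $\mu(x) = \Abs{\{ y : (x,y) \text{ is a frequency of } f \}}$. Since every frequency of $f$ projects to some element of $X$ and the total number of frequencies is $K \le N$, we must have $\sum_{x \in X}\mu(x) = K \le N$, and in particular $\mu(x) \le N$ for each $x$. The set $X$ is given by hypothesis and is finite (of size $\le N$), so the number of admissible multiplicity functions $\mu$ is bounded by $N^{\Abs{X}} \le N^N$, hence finite.

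\textbf{Step 2 (apply Lemma \ref{NlogN} for each choice).} Fix any admissible $\mu$. This is exactly the data (b) in Lemma \ref{NlogN}. Combined with the prescribed values of $f$ on the sampling set $A_N$ (data (a)), Lemma \ref{NlogN} asserts that at most one exponential polynomial $f$ of the stated form can be consistent with both. Therefore, for each of the finitely many admissible $\mu$, there is at most one candidate $f$.

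\textbf{Step 3 (conclude).} The set of $f$ meeting the hypothesis of the present lemma is contained in the union, over admissible $\mu$, of the (at most one) $f$ produced in Step 2. Being a finite union of singletons or empty sets, it is finite. The only mild subtlety to be careful about is that different $\mu$'s could in principle give the same $f$ (this just means some candidates from Step 2 coincide, which only makes the final count smaller), and that some $\mu$'s may not be realized by any $f$ at all (also harmless). No new quantitative technique is needed; the argument is purely a counting reduction to the already established Lemma \ref{NlogN}, and I do not expect any genuine obstacle.
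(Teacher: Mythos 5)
Your proof is correct and is essentially identical to the paper's own argument: both reduce the lemma to Lemma \ref{NlogN} by noting that the only missing information is the multiplicity function recording how many frequencies project to each $x \in X$, that there are finitely many such functions, and that each one determines at most one $f$ by Lemma \ref{NlogN}. Your write-up merely makes the counting bound and the harmless coincidences explicit, which the paper leaves implicit.
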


\begin{proof}
Knowing the values of $f$ at $A_N$ is exactly the data (a) of Lemma \ref{NlogN}. What is missing in order to fully know also data (b) of that Lemma is to know how many frequencies project to each $x\in X$. There are only finitely many possibilities for this information. For each of them there is at most one exponential polynomial fitting the data by Lemma \ref{NlogN}, so, in total, we have finitely many exponential polynomials matching the given values at $A_N$ and the given set of projections $X$.

\end{proof}

But a whole continuum of different exponential polynomials with the same data and the same $x$-projections of their frequencies arise from just two different exponential polynomials with the same data. \red{The conflict between Lemma \ref{infinite-number} that follows and the preceding Lemma \ref{finite-number} will be exploited in the non-constructive, proof by contradiction of our main Theorem \ref{main}.}

\begin{lemma}\label{infinite-number}
Suppose that
\begin{equation*}
f_1(\xi,\eta )= \sum_{j=1}^{K_1} p_j ^1 (\xi,\eta) e^{-2\pi i (x_j ^1 \cdot \xi + y_j ^1 \cdot \eta )}\red{,\ \ (x_j^1, y_j^1) \in \TT^2,}
\end{equation*}
and
\begin{equation*}
	f_2(\xi,\eta)= \sum_{j=1}^{K_2} p_j ^2 (\xi,\eta) e^{-2\pi i (x_j ^2 \cdot \xi + y_j ^2 \cdot \eta )}\red{,\ \ (x_j^2, y_j^2) \in \TT^2,}
\end{equation*}
are two different exponential polynomials with $K_1,K_2\le N$, $p_j ^1,p_j ^2$ polynomials of degree $< D$, and with the same values at $A_{2N}$ as in \eqref{An}.

Then there are infinitely many different exponential polynomials of the form
\begin{equation*}
f(\xi,\eta) = \sum_{j=1}^K p_j (\xi,\eta) e^{-2\pi i (x_j \cdot \xi + y_j \cdot \eta )}\red{,\ \ (x_n, y_j) \in \TT^2,}
\end{equation*}
with $K\le 2N$, $deg(p_j) < D$, with
\begin{equation*}
\Set{x_j}_{1 \le j \le K} \subseteq \ \ \Set{x_j ^1}_{1\le j \le K_1} \cup \Set{x_j ^2}_{1\le j \le K_2}
\end{equation*}
and having the same values at $A_{2N}$
\end{lemma}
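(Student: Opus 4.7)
The plan is to exhibit the infinite family $f_\lambda := (1-\lambda) f_1 + \lambda f_2$ for $\lambda \in \CC$, and verify it satisfies every requirement of the conclusion.

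First I would observe that $f_\lambda$, when expanded and like-frequency terms are combined, is an exponential polynomial whose frequency set is contained in $\Set{(x_j^1, y_j^1)} \cup \Set{(x_j^2, y_j^2)}$, so has at most $K_1 + K_2 \le 2N$ distinct frequencies. Each resulting polynomial coefficient is a $\CC$-linear combination of a $p_j^1$ and a $p_j^2$, hence still has degree $<D$. In particular the $x$-projections of the frequencies of $f_\lambda$ lie in $\Set{x_j^1}_{j\le K_1} \cup \Set{x_j^2}_{j \le K_2}$, as required.

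Next, since $f_1$ and $f_2$ agree at every point of $A_{2N}$, at any $a \in A_{2N}$ we have
\[
f_\lambda(a) = (1-\lambda) f_1(a) + \lambda f_2(a) = f_1(a),
\]
so $f_\lambda$ matches the prescribed values on $A_{2N}$ for every $\lambda \in \CC$.

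Finally I would show the map $\lambda \mapsto f_\lambda$ is injective. Since $f_1 \ne f_2$ as functions on $\RR^2$, there exists $(\xi_0, \eta_0)$ with $(f_2 - f_1)(\xi_0, \eta_0) \ne 0$; then
\[
f_\lambda(\xi_0, \eta_0) = f_1(\xi_0, \eta_0) + \lambda (f_2 - f_1)(\xi_0, \eta_0)
\]
is a non-constant affine function of $\lambda$, so distinct values of $\lambda$ produce distinct functions $f_\lambda$, and hence distinct exponential polynomials. This yields a continuum of valid $f$.

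There is no real obstacle here; the only delicate point is merely bookkeeping, namely checking that after combining like frequencies the number of terms of $f_\lambda$ stays $\le 2N$ and the coefficient degrees remain $<D$, which is immediate from the linearity of the construction. The assumption $f_1 \ne f_2$ is used exactly once, to guarantee that the affine family $f_\lambda$ is non-constant and therefore infinite.
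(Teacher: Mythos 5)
Your proposal is correct and follows essentially the same route as the paper: both form the affine family $f_\lambda = \lambda f_1 + (1-\lambda)f_2$ (yours is the same family reparametrized), check that it has at most $2N$ frequencies projecting into the prescribed set with coefficients of degree $<D$, agrees with the data on $A_{2N}$, and is injective in $\lambda$ because $f_1$ and $f_2$ differ at some point. Your write-up just spells out the bookkeeping a bit more explicitly than the paper does.
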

\begin{proof}
Write for $\lambda \in \CC$
\begin{equation*}
	f_\lambda = \lambda f_1 +(1-\lambda)f_2
\end{equation*}
Then $f_\lambda$ has the same values at $A_{2N}$ (for every $\lambda$) and all these exponential polynomials are different \red{since} there is at least one point \red{$(\xi, \eta)$} where $f_1$ and $f_2$ differ. Finally observe that $f_\lambda$ has at most $2N$ frequencies all of them at locations projecting down inside the set $\{x_j^1\}\cup \{x_j ^2\}$.

\end{proof}

We arrive to our main result.
\begin{theorem}\label{main}
Let
$$
f(\xi,\eta)= \sum_{j=1}^n p_j (\xi,\eta) e^{-2\pi i (x_j \cdot \xi + y_j \cdot \eta )},\red{\ \ (x_j, y_j) \in \TT^2,}
$$
with $n \le N$, $p_j $ being a polynomial of degree $< D$.

Then $f$ is determined by knowing its values on the sampling set
\begin{equation}\label{An-in-main}
A_{2N} = \bigcup_{1\le r\le 2N} \left[ \: 2\Floor{ \frac{2N}{r}} D \: \right]_0 \times [2rD]_0
\end{equation}
with size $O (D^2 N\log N)$.
\end{theorem}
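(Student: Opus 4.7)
The plan is to argue by contradiction using the two preceding lemmas as a matching pair: Lemma \ref{infinite-number} produces infinitely many solutions whenever two genuinely different exponential polynomials agree on $A_{2N}$, while Lemma \ref{finite-number} (applied with $2N$ in place of $N$) guarantees that, once the set of $x$-projections of the frequencies is fixed, only finitely many solutions can match the data on $A_{2N}$. Playing these off each other disposes of the missing information (b) that Lemma \ref{NlogN} required.

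More concretely, I would suppose that $f_1, f_2$ are two exponential polynomials, each with $n\le N$ terms and polynomial coefficients of degree $<D$, that agree on $A_{2N}$, and show $f_1 = f_2$. Denote their frequency sets by $V_1, V_2$ and their $x$-projections by $X_1, X_2$. If $f_1 \neq f_2$, Lemma \ref{infinite-number} yields the one-parameter family $f_\lambda = \lambda f_1 + (1-\lambda) f_2$, all of whose members agree on $A_{2N}$, have at most $2N$ frequencies with coefficients of degree $<D$, and have frequency sets contained in $V_1 \cup V_2$ (hence $x$-projections contained in $X_1 \cup X_2$).

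The key observation is that for any fixed frequency $(x,y)\in V_1\cup V_2$ the corresponding coefficient of $f_\lambda$ is an affine function of $\lambda$, so it vanishes for at most one value of $\lambda$. Consequently, for all but finitely many $\lambda\in\CC$, the actual frequency set of $f_\lambda$ is exactly $V_1\cup V_2$, and in particular its $x$-projection set is exactly $X_1\cup X_2$. I therefore get infinitely many pairwise distinct exponential polynomials of the form required in Lemma \ref{finite-number} (with $N$ replaced by $2N$), all taking the same values on $A_{2N}$ and all sharing the same $x$-projection set $X_1\cup X_2$.

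But Lemma \ref{finite-number} applied with $2N$ in place of $N$ (note that $A_{2N}$ is the sampling set appearing there) asserts that only finitely many such polynomials exist. This contradicts the previous paragraph, so $f_1 = f_2$, proving that the map $f \mapsto (f(a))_{a\in A_{2N}}$ is injective on the class in question. The size bound $|A_{2N}|=O(D^2 N\log N)$ is inherited from Lemma \ref{NlogN}, since replacing $N$ by $2N$ changes only the implicit constant. The only subtle point, and the one I would be most careful to verify, is that for generic $\lambda$ no coefficient collapses to zero, so the projection set stabilizes to $X_1\cup X_2$ for cofinitely many $\lambda$; the rest is a clean combination of the two finiteness/infinitude statements.
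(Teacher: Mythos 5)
Your proof is correct and follows essentially the same route as the paper: assume two distinct polynomials with $\le N$ terms agree on $A_{2N}$, use Lemma \ref{infinite-number} to manufacture infinitely many exponential polynomials with $\le 2N$ terms sharing the same samples, and contradict Lemma \ref{finite-number} applied with $2N$ in place of $N$. Your extra step — checking that for all but finitely many $\lambda$ the coefficient of $f_\lambda$ at each frequency of $V_1\cup V_2$ is nonzero, so the $x$-projection set stabilizes to $X_1\cup X_2$ — is a worthwhile refinement that the paper leaves implicit (one could alternatively pigeonhole over the finitely many subsets of $X_1\cup X_2$), and it correctly addresses the fact that Lemma \ref{finite-number} requires a single \emph{fixed} projection set.
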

\begin{proof}
Suppose not, so that we can find two exponential polynomials
\begin{equation*}
	f_1(\xi,\eta)= \sum_{j=1}^{K_1} p_j ^1 (\xi,\eta) e^{-2\pi i (x_j ^1 \cdot \xi + y_j ^1 \cdot \eta )}
\end{equation*}
and
\begin{equation*}
	f_2(\xi,\eta)= \sum_{j=1}^{K_2} p_j ^2 (\xi,\eta) e^{-2\pi i (x_j ^2 \cdot \xi + y_j ^2 \cdot \eta )}
\end{equation*}
with $K_1,K_2 \le N$, with the same values on $A_{2N}$. From Lemma \ref{infinite-number} then there are infinitely many exponential polynomials with up to $2N$ frequencies  and polynomial coefficients of degree $< D$ that have the same values at $A_{2N}$. But this contradicts Lemma \ref{finite-number} ( used with $2N$ in place of $N$).

\end{proof}


\section{Application to identification of polygons}\label{s:polygons}

A polygonal region in the plane is described by an ordered sequence of $n$ vertices $v_0, v_1, \ldots, v_{n-1} \in \RR^2$. This sequence of vertices, connected by line segments, the \textit{edges}, which do not intersect except at the vertices, defines a polygonal curve, whose interior is the polygonal region. We also assume that successive edges are not parallel to each other: this forbids redundant vertices in the interior of an edge.

Define the edges $w_j = v_{j+1}-v_j$, where $j \in [n-1]_0$ and addition and subtraction of the indices is done mod $n$ (see Fig.\ \ref{fig:polygon}) and the corresponding unit vectors $u_j = w_j/\Abs{w_j}$. Write $s_r$, $r = 1, 2, \ldots, k$, for all the \textit{different} directions (slopes) of the edges $w_j$, written once each (no two $s_r$ are parallel to each other). The $s_j$ are vectors of unit length, so $u_j = \epsilon_j s_{\phi(j))}$, where $\epsilon_j = \pm 1$ and $\phi:[n-1]_0 \to [k]$ is the function which tells us which direction vector $s_r$ corresponds to edge $w_j$.

\begin{figure}[h]
 \input 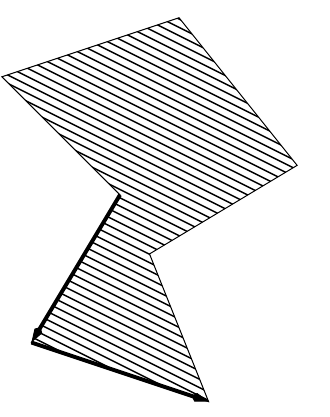_t
 \caption{A polygonal region in the plane.}\label{fig:polygon}
\end{figure}

Let $P$ be a polygonal region in the plane and $\one_P$ its indicator function. The Brion-Barvinok formula \cite{robins2024fourier} is a valuable formula for the Fourier Transform of $\one_P$. In our notation it becomes, for $t=(\xi, \eta) \in \RR^2$,
\begin{equation}\label{bb}
\ft{\one_P}(t) = \frac{1}{4\pi^2} \sum_{j=0}^{n-1} \frac{\Abs{\det(u_{j-1}, u_j)}}{(u_{j-1}\cdot t) \; (u_j \cdot t)} e^{-2\pi i v_j \cdot t}.
\end{equation}
This formula is valid whenever all the denominators $u_j\cdot t$ are not zero. To cancel all denominators we multiply \eqref{bb} by the product
$$
(s_1 \cdot t) \; (s_2 \cdot t) \; \cdots \; (s_k \cdot t).
$$
Since $u_{j-1}$ and $u_j$ correspond to different direction vectors we obtain
\begin{align}\label{bbb}
(s_1 \cdot t) & \; (s_2 \cdot t) \; \cdots \; (s_k \cdot t) \,  \ft{\one_P}(t) = \\
 &= \frac{1}{4\pi^2} \sum_{j=0}^{n-1} \Abs{\det(u_{j-1}, u_j)} \epsilon_{j-1} \epsilon_j \prod_{\substack{r=1,\ldots,k \\ r \neq \phi(j-1), \phi(j)}} (s_r \cdot t) \; e^{-2\pi i v_j \cdot t}. \nonumber
\end{align}
The expression on the right hand side of \eqref{bbb} is an exponential polynomial, which we denote by $f_P(t)$, in $t=(\xi, \eta)$ with $n$ terms and polynomial coefficients of degree $< k-1$.
If we happen to know the direction vectors $s_1, \ldots, s_k$ then knowing the values of $\ft{\one_P}$ on a sampling set $A \subseteq \ZZ^2$ implies that we know the samples of $f_P(t)$ on $A$.

If the sampling set $A$ is enough to identify $f_P(t)$ then we have determined $\ft{\one_P}(t)$ except when $t$ is on the finite set of straight lines
$$
\Set{t\in\RR^2:\ s_r\cdot t= 0 \text{ for some } r \in [k]}.
$$
By the continuity of $\ft{\one_P}(t)$ this function is then determined everywhere and so is $P$ by Fourier inversion.

Combining this with Theorem \ref{main} we obtain the following.
\red{(Again, the restriction of knowing an apriori bounded region where $P$ lies is natural.)}

\begin{theorem}\label{polygons}
Suppose $P \subseteq [0, 1)^2$ is a polygonal region with $n \le N$ vertices and whose edges are of a finite set of \underline{known} slopes $s_1, \ldots, s_k$.

Then $P$ can be determined by sampling its Fourier Transform $\ft{\one_P}$ on the following set of points in $\ZZ^2$
\begin{equation}\label{sampling-set}
A = A(k, N) = \bigcup_{r=1}^N \left[ 2\Floor{\frac{2 N}{r}} (k-1) \right]_0 \times [2r(k-1)]_0
\end{equation}
which is of size $O(k^2 N \log N)$.
\end{theorem}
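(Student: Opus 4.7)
The plan is to combine the preparatory work in equations \eqref{bb}--\eqref{bbb} with Theorem \ref{main}. The reduction has already been essentially set up in the text before the statement, so the job is to package it cleanly and check that the sampling set matches.

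First, observe that the right-hand side of \eqref{bbb} defines an exponential polynomial $f_P(t)$ in $t=(\xi,\eta)$ with frequencies $\{-v_j\}_{j=0}^{n-1}$, hence with $n\le N$ distinct terms, and with polynomial coefficients that are products of at most $k-2$ linear factors $s_r\cdot t$; in particular these coefficients have degree $\le k-2 < k-1$. Thus $f_P$ satisfies the hypotheses of Theorem \ref{main} with $D=k-1$. Because the slopes $s_1,\ldots,s_k$ are known, the scalar factor $(s_1\cdot t)(s_2\cdot t)\cdots(s_k\cdot t)$ can be evaluated at every $t\in\ZZ^2$; consequently, from the values of $\ft{\one_P}$ on the set $A$ in \eqref{sampling-set} we immediately obtain the values of $f_P$ on the same set $A$.

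Next, I would invoke Theorem \ref{main} applied to $f_P$ with $D=k-1$ and with the number-of-terms bound $N$. The sampling set prescribed there is $A_{2N}$ as in \eqref{An-in-main}, which with $D=k-1$ is exactly (or is contained in) the set $A(k,N)$ appearing in \eqref{sampling-set}. This determines $f_P(t)$ as a function on $\RR^2$. Dividing by the (now known and nonvanishing) product $(s_1\cdot t)\cdots(s_k\cdot t)$ recovers $\ft{\one_P}(t)$ on the open set
\[
\Omega = \RR^2 \setminus \bigcup_{r=1}^k \{t\in\RR^2 : s_r\cdot t = 0\}.
\]

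Finally, since $P$ is bounded the function $\one_P$ is in $L^1(\RR^2)$, so $\ft{\one_P}$ is continuous on $\RR^2$. Knowing a continuous function on the complement of finitely many lines determines it everywhere, and then Fourier inversion recovers $\one_P$ and hence $P$ itself. The size estimate $|A|=O(k^2 N\log N)$ is immediate from the corresponding bound $|A_{2N}|=O(D^2 N\log N)$ in Theorem \ref{main}, since $D=k-1$. The only real obstacle is cosmetic: making sure the indexing in \eqref{sampling-set} is at least as large as what Theorem \ref{main} requires, so that we have genuinely enough samples to invoke it; once that bookkeeping is checked, every other step is a direct appeal to the preceding results.
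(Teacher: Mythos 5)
Your proposal is correct and follows essentially the same route as the paper: the text preceding the theorem is the paper's proof, namely pass from $\ft{\one_P}$ to the exponential polynomial $f_P$ of \eqref{bbb} (possible since the slopes, hence the product $(s_1\cdot t)\cdots(s_k\cdot t)$, are known), apply Theorem \ref{main} with $D=k-1$, and recover $\ft{\one_P}$ off the lines $s_r\cdot t=0$ and then everywhere by continuity and Fourier inversion. The one bookkeeping point you flag is real but is an artifact of the paper's own statement rather than of your argument: the union in \eqref{sampling-set} runs only to $r=N$ while $A_{2N}$ in \eqref{An-in-main} runs to $r=2N$, so the containment should be stated with the range of $r$ extended to $2N$ (which does not affect the $O(k^2N\log N)$ bound).
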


\begin{figure}[h]
 \input 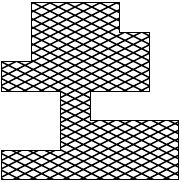_t
 \caption{A polygonal region in the plane with sides parallel to the axes.}\label{fig:ortho}
\end{figure}

\begin{corollary}\label{orthopolygons}
Suppose $P \subseteq [0, 1)^2$ is a polygonal region all of whose edges are parallel to the $x$ or the $y$ axis (see Fig.\ \ref{fig:ortho}).

Then $P$ can be determined by sampling its Fourier Transform on the set $A$ in \eqref{sampling-set} with $k=2$.
\end{corollary}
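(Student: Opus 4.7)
The plan is to read the corollary off as a direct specialization of Theorem \ref{polygons}. For a rectilinear polygon every edge direction belongs to the fixed pair of unit vectors $\{(1,0),(0,1)\}$, so the ``known slopes'' hypothesis of Theorem \ref{polygons} is automatically satisfied with $k=2$: this admissible slope set depends only on our a priori assumption and not on the particular $P$ we are trying to recover.

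A closed rectilinear polygon must use both axis directions (otherwise its edges could not form a closed curve), so $k=2$ is in fact attained. Substituting $k=2$ directly into the sampling set \eqref{sampling-set} of Theorem \ref{polygons} produces the claimed set, and its cardinality is $O(k^2 N \log N) = O(N \log N)$. The vertex bound $N$ is the same $N$ that parametrizes \eqref{sampling-set}, so the implication is immediate.

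There is essentially no obstacle to overcome; the corollary is a literal instantiation of Theorem \ref{polygons} at $k=2$. The only observation worth recording is a simplification: when $k=2$, the product $\prod_{r\ne \phi(j-1),\phi(j)}(s_r\cdot t)$ appearing in \eqref{bbb} is an \emph{empty} product equal to $1$, so the associated exponential polynomial $f_P$ degenerates to a pure exponential sum with constant coefficients. Consequently the degree parameter is $D=k-1=1$, and Lemma \ref{NlogN} (with $D=1$) already suffices to identify $f_P$ from the prescribed samples, without invoking the full exponential-polynomial machinery. The remainder of the argument — passing from $f_P$ back to $\ft{\one_P}$ by dividing by $(s_1\cdot t)(s_2\cdot t)$ on the complement of finitely many lines, and then to $P$ itself by Fourier inversion — is exactly the final paragraph preceding Theorem \ref{polygons} and requires no change in the rectilinear setting.
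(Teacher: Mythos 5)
Your proof is correct and coincides with the paper's (implicit) argument: the corollary is a direct instantiation of Theorem \ref{polygons} with the a priori known slope set $\{(1,0),(0,1)\}$ and $k=2$, under which the polynomial coefficients in \eqref{bbb} are empty products, i.e.\ constants ($D=k-1=1$). One small inaccuracy in your aside: Lemma \ref{NlogN} with $D=1$ does not by itself identify $f_P$, since it additionally requires its data (b) — knowing how many frequencies project to each abscissa — and removing that extra input is precisely the content of Theorem \ref{main}; so the correct reference remains Theorem \ref{main}/Theorem \ref{polygons}, which is how your main argument proceeds anyway.
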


\begin{remark}
It is perhaps interesting to see that when identifying a polygon in the plane all of whose edges are parallel to the axes it is enough to know the vertices: the interconnections of the vertices via axis-parallel edges (and when these vertices are guaranteed to be non-degenerate) arise uniquely.

To see this observe first that any vertical line (parallel to the $y$-axis) must always contain an even number of polygon vertices and they are always connected as follows. Since every polygon vertex has both a vertical and a horizontal edge it follows that all vertical edges of the vertices belonging to a vertical line must connect them among themselves and the only way for this is if the lowest vertex connects to the second lowest, the third lowest to the fourth and so on. This determines all vertical edges of the polygon. Similarly all horizontal edges are determined.

This is not strictly used in our (non-constructive) proof as what we do is to determine the Fourier Transform of the indicator function of the polygon, which contains all the information we need.
\end{remark}

\begin{figure}[h]
 \input 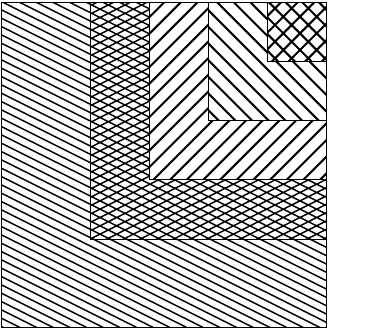_t
 \caption{The level sets of a function are polygonal regions with few slopes.}\label{fig:levelsets}
\end{figure}

The following Theorem, a generalization of Theorem \ref{polygons}, has essentially the same proof \red{as Theorem \ref{polygons}}, which we indicate below.
\begin{theorem}\label{values}
Suppose $f:[0, 1)^2\to\CC$ takes finitely many values and each level set of $f$
$$
L(v) = \Set{t \in [0, 1)^2:\ f(t) = v}
$$
is a polygonal region whose edges are of a finite set of \underline{known} slopes $s_1, \ldots, s_k$ (see Fig.\ \ref{fig:levelsets}). Suppose also that the total number of vertices appearing in any $L(v)$ (written once each) is $n\le N$.

Then $f$ can be determined by the samples of $\ft{f}$ on the set $A(k, N)$ in \eqref{sampling-set} which is of size $O(k^2 N \log N)$.
\end{theorem}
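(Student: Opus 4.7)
The plan is to reduce to Theorem \ref{main} by the same device used in the proof of Theorem \ref{polygons}, with the only new ingredient being that the Brion--Barvinok formulas for the various level sets of $f$ can be added together without blowing up the number of frequencies.

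First I would decompose $f$ over its (finitely many) values:
$$
f = \sum_{v} v \cdot \one_{L(v)}, \qquad \ft{f}(t) = \sum_{v} v \cdot \ft{\one_{L(v)}}(t).
$$
Applying the Brion--Barvinok formula \eqref{bb} to each $\ft{\one_{L(v)}}(t)$ and multiplying the resulting equality through by the known polynomial
$(s_1 \cdot t)(s_2 \cdot t)\cdots(s_k \cdot t)$, exactly as in the derivation of \eqref{bbb}, clears all denominators. Summing over $v$ yields
$$
(s_1 \cdot t)(s_2 \cdot t)\cdots(s_k \cdot t) \, \ft{f}(t) = g_f(t),
$$
where $g_f$ is an exponential polynomial in $t=(\xi,\eta)$ whose frequencies are (a subset of) the union of the vertex sets of all level sets $L(v)$ and whose polynomial coefficients have degree $<k-1$. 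By the hypothesis on the total number of distinct vertices, after collecting like terms $g_f$ has at most $N$ frequencies.

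Since the slopes $s_1,\ldots,s_k$ are assumed known, the multiplier $(s_1\cdot t)\cdots(s_k\cdot t)$ is an explicit polynomial, so from the samples of $\ft{f}$ on $A(k,N)$ we obtain the samples of $g_f$ on the same set. By Theorem \ref{main}, these samples determine $g_f$ as an exponential polynomial. Dividing by $(s_1\cdot t)\cdots(s_k\cdot t)$ recovers $\ft{f}(t)$ at every $t$ off the finite union of lines $\{t\in\RR^2:\ s_r\cdot t=0\}$. Because $f$ is bounded and compactly supported, $\ft{f}$ is continuous, so it is determined everywhere, and Fourier inversion then recovers $f$.

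The only potential obstacle is verifying that after summing the Brion--Barvinok contributions of the distinct level sets one still gets an exponential polynomial with at most $N$ frequencies and with polynomial coefficients of degree $<k-1$; both are immediate from the hypothesis, since shared vertices between level sets merely add scalar coefficients to a single exponential term, and the clearing-of-denominators argument already producing degree $<k-1$ coefficients in \eqref{bbb} is linear in the polygon and therefore survives the finite linear combination $\sum_v v\cdot\ft{\one_{L(v)}}$.
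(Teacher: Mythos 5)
Your proof is correct and follows essentially the same route as the paper's: decompose $f$ over its values, apply Brion--Barvinok to each level set, clear denominators with $(s_1\cdot t)\cdots(s_k\cdot t)$, and invoke Theorem \ref{main} for the resulting exponential polynomial with at most $N$ frequencies and coefficients of degree $<k-1$. The remark about shared vertices merely merging into a single exponential term is a point the paper leaves implicit, and you handle it correctly.
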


\begin{proof}
The function $f$ can be written as the finite sum
$$
f(x) = \sum_{v \in V} v \one_{L(v)}(x),
$$
where $V \subseteq \CC$ is the finite set of values taken by $f$. It follows that
$$
\ft{f}(t) = \sum_{v \in V} v \ft{\one_{L(v)}}(t).
$$
Using again the Brion-Barvinok formula for each $\ft{\one_{L(v)}}$ we obtain an identity analogous to \eqref{bb}, valid, again, whenever all $s_j \cdot t$ are non-zero. As in the proof of Theorem \ref{polygons}, multiplying by $(s_1 \cdot t) \cdots (s_k \cdot t)$ we obtain an exponential polynomial analogous to \eqref{bbb} which has at most $N$ vertices and the polynomial coefficients all have degree $< k-1$. The remaining of the proof is exactly the same.

\end{proof}


\subsection{Unknown slopes}\label{ss:unknown-slopes}

When we try to extend Theorems \ref{polygons} and \ref{values} to the case of knowning the maximum number of different slopes but not knowing the slopes themselves, we encounter the unpleasant fact that when one subtracts two functions like those in Theorem \ref{values} one obtains again such a function but with much larger parameters. If the numbers $f_1, f_2$ are as in Theorem \ref{values}, with at most $N$ vertices total in the polygonal regions involved then it can be that the number of vertices in the corresponding representation of $f_1-f_2$ is quadratic in $N$, as shown in Fig.\ \ref{fig:crossings}. If we try to apply Theorem \ref{values} to $f_1-f_2$ we end up with a superquadratic number of samples.

\begin{figure}[h]
 \input 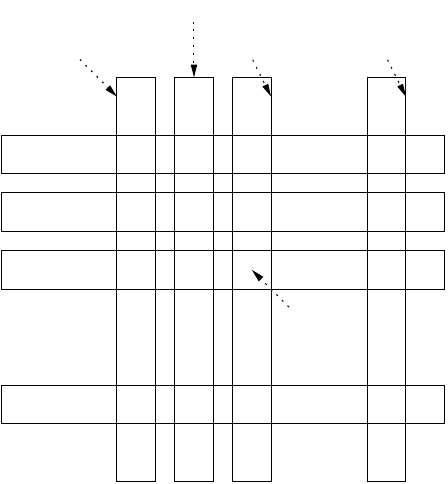_t
 \caption{The two functions $f_1, f_2$ have $N$ different levels each, with number of vertices $O(N)$, but $f_1-f_2$ may have $N^2$ different values with a quadratic total number of vertices.}\label{fig:crossings}
\end{figure}

The solution to this is to change the representation. Instead of parametrizing $f$ by the number of values it takes we parametrize it by the number of building blocks, indicator functions of a polygonal region, that are needed to construct $f$.
\begin{theorem}\label{th:sum-of-simple-functions}
Suppose $f:[0, 1)^2\to\CC$ is of the form
\begin{equation}\label{sum-of-simple-functions}
f(x, y) = \sum_{j=1}^n f_j \one_{P_j}(x, y),
\end{equation}
where $f_j \in \CC$ and the $P_j$ are polygonal regions, not necessarily disjoint, with a total number of vertices at most $N$. Suppose also that the different slopes appearing in some $P_j$ are among the \underline{known} slopes $s_1, \ldots, s_k$.

Then $f$ can be determined by the samples of $\ft{f}$ on the set $A(k, N)$ in \eqref{sampling-set} which is of size $O(k^2 N \log N)$.
\end{theorem}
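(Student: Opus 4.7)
The plan is to reduce the problem, exactly as in the proof of Theorem \ref{polygons}, to the identification of a single bivariate exponential polynomial from its samples, and then to invoke Theorem \ref{main}. The fact that $f$ is a superposition of indicators of polygonal regions (rather than being itself a single indicator or a function with few level sets) will affect only the counting of frequencies in the resulting exponential polynomial, not the identification machinery.

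First, I apply the Brion--Barvinok formula \eqref{bb} to each $\ft{\one_{P_j}}(t)$ and form the weighted sum $\sum_j f_j \ft{\one_{P_j}}(t)$, giving an expression for $\ft{f}(t)$ valid whenever $s_r\cdot t \neq 0$ for all $r$. Since every edge direction of every $P_j$ lies among the known slopes $s_1,\ldots,s_k$, multiplying by the product $(s_1\cdot t)\cdots(s_k\cdot t)$ clears all denominators of every summand, exactly as in the passage from \eqref{bb} to \eqref{bbb}. This produces
$$
(s_1\cdot t)\cdots(s_k\cdot t)\,\ft{f}(t) = F(t),
$$
where $F$ is a bivariate exponential polynomial. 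Each vertex $v$ of any $P_j$ contributes to $F$ a frequency $v$ whose polynomial coefficient has degree $\le k-2$; if the same $v$ occurs as a vertex of several $P_j$, the polynomial coefficients add, and the degree bound $<k-1$ is preserved. After collecting terms at coinciding vertices, $F$ has at most $N$ distinct frequencies and polynomial coefficients of degree $<k-1$.

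Next I apply Theorem \ref{main} with these parameters, namely $N$ frequencies and $D=k-1$: the sampling set $A(k,N)$ in \eqref{sampling-set} is exactly the one guaranteed by Theorem \ref{main} for these parameters, of size $O(k^2 N\log N)$. Because the slopes $s_r$ are known, the scalar prefactor $(s_1\cdot t)\cdots(s_k\cdot t)$ can be evaluated at each $t\in A(k,N)$, so the samples of $\ft{f}$ on $A(k,N)$ determine the samples of $F$ on $A(k,N)$, and Theorem \ref{main} then recovers $F$ uniquely. Dividing $F$ by the nonzero polynomial prefactor recovers $\ft{f}(t)$ on the complement of the finite union of lines $\{t:\ s_r\cdot t = 0\}$; since $f$ is bounded and compactly supported, $\ft{f}$ is continuous on $\RR^2$, and is therefore determined everywhere. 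Fourier inversion then determines $f$.

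The only real subtlety, and the point on which I would be most careful, is the vertex bookkeeping in the first step: when two polygons $P_j, P_{j'}$ share a vertex $v$, the corresponding contributions to $F$ coincide in frequency and their coefficients add. The key observation is that the sum of two polynomials of degree $<k-1$ is again a polynomial of degree $<k-1$, so the degree bound survives and the number of distinct frequencies in $F$ is bounded by $N$ rather than by $\sum_j |\mathrm{vertices}(P_j)|$ counted with multiplicity. This is exactly the feature of the representation \eqref{sum-of-simple-functions} (as opposed to the level-set representation used in Theorem \ref{values}) that avoids the quadratic blow-up illustrated in Fig.\ \ref{fig:crossings}.
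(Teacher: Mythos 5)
Your proof is correct and follows exactly the paper's route: apply Brion--Barvinok to each $\ft{\one_{P_j}}$, clear denominators with $(s_1\cdot t)\cdots(s_k\cdot t)$, collect coinciding vertices to get an exponential polynomial with at most $N$ frequencies and coefficients of degree $<k-1$, and invoke Theorem \ref{main} followed by continuity and Fourier inversion. The paper's own proof is just ``exactly the same as the proof of Theorem \ref{values}''; your explicit treatment of the vertex bookkeeping when polygons share vertices is a welcome elaboration of the same argument.
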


\begin{proof}
Exactly the same as the proof of Theorem \ref{values}.

\end{proof}

\begin{corollary}\label{cor:unkown-slopes}
Suppose $f$ is as in Theorem \ref{th:sum-of-simple-functions} with parameters $k$ (maximum number of different slopes) and $N$ (maximum total number of vertices appearing in any of the polygons $P_j$), but we do not assume that we know the slopes.

Then $f$ can be determined by the samples of $\ft{f}$ on the set $A(2k, 2N)$ in \eqref{sampling-set} which is of size $O(k^2 N \log N)$.
\end{corollary}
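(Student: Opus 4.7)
The strategy is to reduce to the known-slopes case of Theorem \ref{th:sum-of-simple-functions} by using the linearity of the family \eqref{sum-of-simple-functions}. Suppose $f^1$ and $f^2$ are two functions as in Theorem \ref{th:sum-of-simple-functions}, each with at most $k$ distinct slopes and at most $N$ total vertices (with the slopes themselves unknown), and suppose the samples of $\ft{f^1}$ and $\ft{f^2}$ agree on $A(2k, 2N)$. I want to show $f^1 = f^2$.

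First, form $g = f^1 - f^2$. By concatenating the representations $f^1 = \sum f^1_j \one_{P^1_j}$ and $-f^2 = \sum (-f^2_j) \one_{P^2_j}$, we see that $g$ is again of the form \eqref{sum-of-simple-functions}: it is a complex linear combination of indicator functions of polygonal regions, with total vertex count at most $2N$, and with the set of distinct slopes appearing in its polygonal pieces being the union of the slope sets of $f^1$ and $f^2$, hence of size at most $2k$. By hypothesis, $\ft{g}$ vanishes on $A(2k, 2N)$.

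Next, although the slopes of $g$ are unknown to us, they do form a specific finite set $\{\tilde s_1, \ldots, \tilde s_m\}$ with $m \le 2k$. I apply Theorem \ref{th:sum-of-simple-functions} to $g$ with exactly these slopes taken as the ``known'' slopes in the hypothesis of the theorem and with parameters $2k$ and $2N$. The theorem guarantees that any function of the form \eqref{sum-of-simple-functions} whose polygonal pieces use only slopes in $\{\tilde s_1,\ldots,\tilde s_m\}$ and whose total vertex count is at most $2N$ is determined by its Fourier samples on $A(2k, 2N)$. The zero function trivially belongs to this family and has all-zero samples; since $g$ also belongs to this family and has all-zero samples, injectivity forces $g \equiv 0$, i.e.\ $f^1 = f^2$.

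The main (mild) subtlety is the conceptual passage from ``we do not know the slopes a priori'' to ``fix the slopes to be whatever they turn out to be for $g$.'' This works because the sampling set $A(2k, 2N)$ in \eqref{sampling-set} depends only on $k$ and $N$ and not on the slopes, and because we only need injectivity (non-adaptive identification in principle) rather than an algorithm. The doubling $k \to 2k$ and $N \to 2N$ does not affect the asymptotic bound, so $|A(2k, 2N)| = O(k^2 N \log N)$ as claimed.
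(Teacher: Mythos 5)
Your proposal is correct and follows essentially the same route as the paper: form the difference $f^1-f^2$, observe it is again of the form \eqref{sum-of-simple-functions} with parameters at most $2k$ and $2N$, and apply Theorem \ref{th:sum-of-simple-functions} to conclude the difference vanishes. Your explicit remark that one may take the ``known'' slopes to be whatever the slopes of the difference happen to be (since the sampling set depends only on $k,N$) is a useful clarification of a point the paper leaves implicit, but it is not a different argument.
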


\begin{proof}
Suppose $f_1, f_2$ are both of the form \eqref{sum-of-simple-functions} with parameters $k$ and $N$. Then $f_1-f_2$ is also of the same form but with parameters $2k$ and $2N$, at most. If $f_1, f_2$ have the same Fourier samples on $A(2k, 2N)$ then, by Theorem \ref{th:sum-of-simple-functions}, since $f_1-f_2$ has Fourier samples identically 0 on $A(2k, 2N)$, it follows that $f_1 \equiv f_2$.

\end{proof}

Refering to Fig.\ \ref{fig:crossings} notice that $f_1-f_2$ has parameters $2k$ and $2N$ if we assume that $f_1, f_2$ have parameters $k$ and $N$. We do not demand that the $P_j$ in \eqref{sum-of-simple-functions} are disjoint and this makes for a more flexible and algebraically pliable representation.

\begin{corollary}\label{cor:polygons-unkown-slopes}
Suppose $P \subseteq [0, 1)^2$ is a polygonal region with $n \le N$ vertices and whose edges have at most $k$ different (unknown) slopes.

Then $P$ can be determined by sampling its Fourier Transform $\ft{\one_P}$ on
$A(2k, 2N)$ which is of size $O(k^2 N \log N)$.
\end{corollary}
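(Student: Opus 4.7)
The plan is to view the indicator function $\one_P$ as a single-term instance of the representation \eqref{sum-of-simple-functions} and then invoke Corollary \ref{cor:unkown-slopes} directly. Concretely, I would write $\one_P = 1\cdot\one_P$, which places $\one_P$ in the class described in Theorem \ref{th:sum-of-simple-functions} with the data $n=1$, $f_1=1$, $P_1=P$. The resulting parameters match those of the corollary: at most $k$ distinct (unknown) slopes appear among the edges of $P_1$, and the total number of vertices of $P_1$ is at most $N$.

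Next, I would apply Corollary \ref{cor:unkown-slopes} to $f = \one_P$ with parameters $k$ and $N$. That corollary asserts that the samples of $\ft{f}$ on the set $A(2k,2N)$ distinguish $f$ from every other function of the form \eqref{sum-of-simple-functions} with parameters $k$ and $N$; in particular, they distinguish $\one_P$ from $\one_{P'}$ for any other polygonal region $P'$ with at most $N$ vertices and at most $k$ different slopes. The cardinality bound $|A(2k,2N)| = O(k^2 N \log N)$ is exactly the estimate already verified in the proof of Lemma \ref{NlogN} and used in Theorem \ref{main}. Recovering $P$ from $\one_P$ is then immediate, since the polygonal region is determined by its indicator function.

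I do not anticipate any genuine obstacle in carrying this out: the subtraction argument that forces the jump from $(k,N)$ to $(2k,2N)$—namely, that a difference of two candidates still lies in the class \eqref{sum-of-simple-functions} with doubled parameters—has already been carried out inside Corollary \ref{cor:unkown-slopes}. The key design decision that makes this specialization clean is the flexibility built into Theorem \ref{th:sum-of-simple-functions}: the polygons $P_j$ are not required to be disjoint and the coefficients $f_j$ are arbitrary complex numbers, so the single-indicator case falls out without any further bookkeeping. In effect, the only content of this corollary beyond the previous results is the observation that a single polygon is the simplest possible instance of the sum-of-indicators representation.
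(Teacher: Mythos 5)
Your proposal is correct and is essentially identical to the paper's own proof: the paper likewise observes that $\one_P$ is a one-term instance of the representation \eqref{sum-of-simple-functions} and then applies Corollary \ref{cor:unkown-slopes} directly. No gaps.
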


\begin{proof}
The function $\one_P$ is of the form covered by Corollary \ref{cor:unkown-slopes}, so it is determined by its Fourier samples on $A(2k, 2N)$.

\end{proof}

\begin{remark}
It is less than satisfying that the maximum number $k$ of different slopes appears quadratically in the size of the sample. Of course in the general case of exponential polynomials with coefficients of degree $< k$ the number of parameters involved in each polynomial coefficient is also quadratic so one cannot expect a general improvement. But in the case of polygonal regions the polynomial coefficients that appear on the right side of \eqref{bbb} are a product of $\le k$ linear forms in $\RR^2$ and that involves only $2k$ parameters, so one may hope to find a way to exploit this. As it stands, using the general recovery of exponential polynomials as a means to recover polygons the general case with $N$ different slopes gives a sample size larger than $N^3$ which is much larger than the number of parameters.
\end{remark}

\bibliographystyle{alpha}
\bibliography{mk-bibliography.bib}

\end{document}